\renewcommand{\leq}{\leqslant}
\renewcommand{\geq}{\geqslant}
\def\build#1_#2^#3{\mathrel{
\mathop{\kern 0pt#1}\limits_{#2}^{#3}}}
\theoremstyle{plain}
\newtheorem{theorem}{Theorem}
\newtheorem{corollary}{Corollary}
\newtheorem{proposition}[corollary]{Proposition}
\newtheorem{lemma}{Lemma}
\theoremstyle{definition}
\theoremstyle{claim}
\theoremstyle{remark}
\newtheorem*{remark}{Remark}
\begin{document}
\global\long\def\dist#1#2#3{\operatorname{dist}_{{#1}}\left({#2},{#3}\right)}%

\title{A note on inhomogeneous percolation on ladder graphs}
\author{Bernardo N. B. de Lima\footnote{Departamento de Matem{\'a}tica, Universidade Federal de Minas Gerais, Av. Ant\^onio
Carlos 6627 C.P. 702 CEP 30123-970 Belo Horizonte-MG, Brazil} \and Humberto C. Sanna$^*$}

\date{}
\maketitle

\begin{abstract}
Let $\mathbb{G}=\left(\mathbb{V},\mathbb{E}\right)$ be the graph
obtained by taking the cartesian product of an infinite and connected
graph $G=(V,E)$ and the set of integers $\mathbb{Z}$. We choose
a collection $\mathcal{C}$ of finite connected subgraphs of $G$
and consider a model of Bernoulli bond percolation on $\mathbb{G}$
which assigns probability $q$ of being open to each edge whose projection
onto $G$ lies in some subgraph of $\mathcal{C}$ and probability
$p$ to every other edge. We show that the critical percolation threshold
$p_{c}\left(q\right)$ is a continuous function in $\left(0,1\right)$,
provided that the graphs in $\mathcal{C}$ are ``well-spaced'' in
$G$ and their vertex sets have uniformly bounded cardinality. This generalizes a recent result due to Szab\'o and Valesin. 
\end{abstract}

{\footnotesize MSC numbers:  60K35, 82B43}

\section{Introduction}

In this note we address a particular case of the following problem:
let $\mathbb{G}=\left(\mathbb{V},\mathbb{E}\right)$ be an infinite,
connected graph, and $\mathbb{E}',\mathbb{E}''$ a decomposition of
the edge set $\mathbb{E}$. Consider the Bernoulli percolation model
in which the edges of $\mathbb{E}'$ are open with probability $p$
and the edges of $\mathbb{E}''$, regarded as the set of inhomogeneities,
are open with probability $q$. If we define the quantity $p_{c}(q)$
as the supremum of the values of $p$ for which percolation with parameters
$p,q$ does not occur, what can we say about the behavior of the function
$q\mapsto p_{c}(q)$?

Perhaps one of the earliest works concerning this type of problem
is due to Kesten, presented in \cite{Ke}. Considering the
square lattice $\mathbb{L}^{2}=\left(\mathbb{Z}^{2},\mathbb{E}\right)$
and choosing $\mathbb{E}''$ and $\mathbb{E}'$ to be respectively
the sets of vertical and horizontal edges, he proves that $p_{c}(q)=1-q$.
Later on, in \cite{Z}, Zhang also considers the square lattice,
but with the edge set $\mathbb{E}''$ being only the vertical edges
within the $y$-axis and $\mathbb{E}'=\mathbb{E}\setminus\mathbb{E}''$.
He proves that for any $q<1$ there is no percolation at $p=1/2$,
which implies that $p_{c}(q)$ is constant in the interval $[0,1)$.
In the context of long-range percolation, the authors in \cite{LRV}
consider an oriented, $d$-regular, rooted tree $\mathbb{T}_{d,k}$,
where besides the usual set of ``short bonds'' $\mathbb{E}'$, there
is a set $\mathbb{E}''$ of ``long edges'' of length $k\in\mathbb{N}$,
pointing from each vertex $x$ to its $d^{k}$ descendants at distance
$k$. They show that $q\mapsto p_{c}(q)$ is continuous and strictly
decreasing in the region where it is positive. This conclusion is
also achieved in \cite{CLS}, where the authors consider the slab
of thickness $k$ induced by the vertex set $\mathbb{Z}^{2}\times\{0,\ldots,k\}$,
with $\mathbb{E}'$ and $\mathbb{E}''$ being respectively the sets
of edges parallel and perpendicular to the $xy$-plane.

Another work that we mention is that of Iliev, Janse van Rensburg
and Madras, \cite{IRM}. In the context of bond percolation in $\mathbb{Z}^{d}$,
they define $\mathbb{E}''$ to be the set of edges within the subspace
$\mathbb{Z}^{s}\times\{0\}^{d-s}$, $2\leq s<d$, and study the behavior
of the quantity $q_{c}(p)$, defined analogously to $p_{c}(q)$. Among
other standard results, the authors prove that $q_{c}(p)$ is strictly
decreasing in the interval $[0,p_{c}]$, where $p_{c}$ is the percolation
threshold in the homogeneous case.

More recently, in \cite{SV}, Szab\'o and Valesin consider the same 
framework for $\mathbb{G}$, $\mathbb{E}'$ and $\mathbb{E}''$
and prove that, under this setting, $p_{c}(q)$ is continuous in the
interval $(0,1)$. In their model, the graph $\mathbb{G}$ is obtained
by taking the cartesian product of an infinite and connected graph $G=(V,E)$
and the set of integers $\mathbb{Z}$. The set of inhomogeneities
$\mathbb{E}''$ is constructed by selecting a finite number of infinite
``columns'' and ``ladders'' and considering all the edges within it,
and $\mathbb{E}'=\mathbb{E}\setminus\mathbb{E}''$.

It is in the spirit of \cite{SV} that we approach the aforementioned
problem. More specifically, we extend their result in the sense that
the continuity of $p_{c}(q)$ also holds when we set parameter $q$
on infinitely many ``ladders'' and ``columns'', as long as they are
``well spaced''.

\subsection{\label{subsec:ladder-graphs-result}Inhomogeneous percolation on
ladder graphs: definitions and result}

Let $G=(V,E)$ be an infinite, connected and bounded degree graph
with vertex set $V$ and edge set $E$. Starting from $G$, we define
the graph $\mathbb{G}=(\mathbb{V},\mathbb{E})$, where $\mathbb{V}:= V\times\mathbb{Z}$
and 
\[
\mathbb{E}:=\bigl\{\left\langle \left(u,n\right),\left(v,n\right)\right\rangle ;\left\langle u,v\right\rangle \in E,n\in\mathbb{Z}\bigr\}\cup\bigl\{\left\langle \left(w,n\right),\left(w,n+1\right)\right\rangle ; w\in V,n\in\mathbb{Z}\bigr\}.
\]

Consider the Bernoulli percolation process on $\mathbb{G}$ described
as follows. Every edge of $\mathbb{E}$ can be \emph{open} or \emph{closed},
states which shall be represented by $1$ and $0$, respectively.
Hence, a typical percolation configuration is an element of $\Omega=\{0,1\}^{\mathbb{E}}$.
As usual, the underlying $\sigma$-algebra is the one generated by
the finite-dimensional cylinder sets of $\Omega$. For the probability
measure of the process, we shall define it based on the rule specified
below:

Fix a family of subgraphs $\left\{ G^{(r)}=\left(U^{(r)},E^{(r)}\right)\right\} _{r\in\mathbb{N}}$
of $G$, such that:
\begin{itemize}
\item $G^{(r)}$ is finite and connected for every $r\in\mathbb{N}$;
\item $\dist G{U^{(i)}}{U^{(j)}}\geq3,\forall i\neq j$ (where $\dist G{.}{.}$ denotes the graph distance).
\end{itemize}
For each $r\in\mathbb{N}$, let
\begin{equation}
\begin{split}\mathbb{E}^{\text{in},(r)} & :=\left\{ \left\langle \left(u,n\right),\left(v,n\right)\right\rangle ;\langle u,v\rangle\in E^{(r)},n\in\mathbb{Z}\right\} \\
 & \quad\cup\left\{ \left\langle \left(w,n\right),\left(w,n+1\right)\right\rangle ; w\in U^{(r)},n\in\mathbb{Z}\right\} 
\end{split}
\label{eq:internal-cylinder-infinite}
\end{equation}

Given $p\in\left[0,1\right]$ and $q\in(0,1)$, declare each edge
of $\mathbb{E}^{\text{in},(r)}$ open with probability $q$, independently
of all other edges, for every $r\in\mathbb{N}$. Likewise, declare
each edge of $\mathbb{E}\setminus\left(\cup_{r\in\mathbb{N}}\mathbb{E}^{\text{in},(r)}\right)$
open with probability $p$, also independently of any other edge.
Let $\mathbb{P}_{q,p}$ be the law of the open edges for the process
just described.

Having established our model, we turn our attention to state the main
result of this section. First, a few definitions are required.

An \emph{open path} in $\mathbb{G}$ is a set of distinct vertices
$\left(v_{0},n_{0}\right),\left(v_{1},n_{1}\right),\ldots,\left(v_{m},n_{m}\right)$
such that for every $i=0,\ldots,m-1$, $\left\{ \left(v_{i},n_{i}\right),\left(v_{i+1},n_{i+1}\right)\right\} \in\mathbb{E}$
and is open. Given $\omega\in\Omega$ and $\left(v_{0},n_{0}\right),(v,n)\in\mathbb{E}$,
we say that $(v,n)$ can be reached from $\left(v_{0},n_{0}\right)$
in the configuration $\omega$ either if the two vertices are equal
or if there is an open path from $\left(v_{0},n_{0}\right)$ to $(v,n)$.
Denote this event by $\left(v_{0},n_{0}\right)\leftrightarrow(v,n)$; we also use the notation $(v_0,n_0)\stackrel{S}{\leftrightarrow}(v,n)$ to denote the event where there exists an open path connecting $(v_{0},n_{0})$ and $(v,n)$ with all vertices belonging to the set $S$. The \emph{cluster} $C_{(v,n)}$ of $(v,n)$ in the configuration $\omega$
is the set of vertices that can be reached from $(v,n)$. That is,
\[
C_{(v,n)}:=\left\{ \left(u,m\right)\in\mathbb{V}; (v,n)\leftrightarrow(u,m)\right\} .
\]
In particular, we denote $C_{v}=C_{(v,0)}$. If $\left|C_{(v,n)}\right|=\infty$,
we say that the vertex $(v,n)$ \emph{percolates} and write $\left\{ (v,n)\leftrightarrow\infty\right\} $
for the set of such realizations.

Now, fix $v\in V$ and note that whether or not $\mathbb{P}_{q,p}\left((v,0)\leftrightarrow\infty\right)>0$
depends on the values of the parameters $p$ and $q$. With this in
mind, we define the \emph{critical curve} of our model as a function
of $q$, namely
\[
p_{c}(q):=\sup\left\{ p\in[0,1];\mathbb{P}_{q,p}\left((v,0)\leftrightarrow\infty\right)=0\right\} .
\]
One should observe that although the probability $\mathbb{P}_{q,p}\left((v,0)\leftrightarrow\infty\right)$
may vary from vertex to vertex, the value of $p_{c}(q)$ does not
depend on the choice of $v\in V$, since $\mathbb{G}$ is connected.

What we shall prove in the next section is, in some sense, a generalization
of Theorem 1 in \cite{SV}. It states that the continuity of $p_{c}(q)$
still holds, provided that the cardinality of the sets $U^{(r)}$
are uniformly bounded.
\begin{theorem}
\label{thm:continuity-pcrit} If $\sup_{r\in\mathbb{N}}\left|U^{(r)}\right|<\infty$ and $\dist G{U^{(i)}}{U^{(j)}}\geq3,\forall i\neq j$,
then $q\mapsto p_{c}(q)$ is continuous in $(0,1)$.
\end{theorem}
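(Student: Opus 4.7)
My plan is the classical block/renormalization scheme for proving continuity of critical curves, adapted to this inhomogeneous setting with infinitely many well-spaced columns. First I would record monotonicity: for $q_1 < q_2$ and any $p$, the obvious monotone coupling shows $\mathbb{P}_{q_1,p}$ is stochastically dominated by $\mathbb{P}_{q_2,p}$, hence $p_c$ is non-increasing. Consequently, at every $q_0 \in (0,1)$ the one-sided limits exist and satisfy $p_c(q_0^-) \geq p_c(q_0) \geq p_c(q_0^+)$, and continuity at $q_0$ reduces to proving both upper and lower semicontinuity of $p_c$ separately.

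For upper semicontinuity at $q_0$, fix $p > p_c(q_0)$ so that $(q_0,p)$ is supercritical. I would introduce a ``good block'' event $A_L$ on a finite box $B_L \subset \mathbb{V}$ (say, a ball of radius $L$ in $G$ crossed with $[-L,L]$), encoding the existence of suitable crossing clusters. The plan has two steps. First (renormalization/Peierls), one shows that if $\mathbb{P}_{q,p}(A_L) > 1 - \eta$ for a suitable threshold, then good blocks stochastically dominate a supercritical $1$-dependent Bernoulli bond percolation on a coarse-grained lattice (via Liggett--Schonmann--Stacey), which implies $(q,p)$ is supercritical. Second (Grimmett--Marstrand-type), supercriticality at $(q_0,p)$ yields $\mathbb{P}_{q_0,p}(A_L) \to 1$ as $L \to \infty$. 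Since $A_L$ depends on finitely many edges, $q \mapsto \mathbb{P}_{q,p}(A_L)$ is continuous; choosing $L$ large and $q$ close to $q_0$ keeps $\mathbb{P}_{q,p}(A_L) > 1 - \eta$, giving supercriticality of $(q,p)$ nearby and hence $\limsup_{q \to q_0} p_c(q) \leq p_c(q_0)$.

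For lower semicontinuity at $q_0$, fix $p < p_c(q_0)$ so that $(q_0,p)$ is subcritical. I would use a dual subcritical finite-size criterion: if for some $L$ the probability that the cluster of a fixed vertex leaves the box of side $L$ is small enough, a Peierls-type argument shows $(q,p)$ is subcritical. Subcriticality at $(q_0,p)$ supplies such an $L$---via exponential decay of connection probabilities in the subcritical phase, by an adaptation of Menshikov / Aizenman--Barsky to this setting---and continuity in $(q,p)$ of the relevant finite-volume cluster-exit event then preserves the smallness for $(q,p)$ close to $(q_0,p)$, yielding $\liminf_{q \to q_0} p_c(q) \geq p_c(q_0)$.

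The main obstacle is carrying out the block argument without strict translation invariance, since the inhomogeneous columns break the symmetries on which the classical Grimmett--Marstrand / Liggett--Schonmann--Stacey machinery relies. The hypotheses of Theorem~\ref{thm:continuity-pcrit}, namely $\sup_r |U^{(r)}| < \infty$ and $\dist G{U^{(i)}}{U^{(j)}} \geq 3$ for $i \neq j$, are what restore uniform local control: any ball of radius $L$ in $G$ meets only boundedly many columns (the bound depending on $L$ and the maximal degree of $G$), and each contributes a uniformly bounded number of inhomogeneous edges to a block. Hence the threshold $\eta$ can be chosen independently of the block's location, which is precisely what extends the Szab\'o--Valesin result from finitely to infinitely many well-spaced columns.
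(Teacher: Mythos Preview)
Your approach has a genuine gap. The graph $\mathbb{G}=G\times\mathbb{Z}$ is built from an \emph{arbitrary} infinite, connected, bounded-degree graph $G$, and virtually every ingredient of your scheme implicitly assumes $\mathbb{Z}^d$-like structure that is simply absent here. There is in general no coarse-grained lattice on which to run Liggett--Schonmann--Stacey: if $G$ is, say, a $3$-regular tree, there is no sensible way to tile $G\times\mathbb{Z}$ by translates of a finite block so that adjacent blocks overlap in a controlled way. The Grimmett--Marstrand-type statement that $\mathbb{P}_{q_0,p}(A_L)\to 1$ for a supercritical $(q_0,p)$ relies on translation invariance and, more seriously, on uniqueness of the infinite cluster; when $G$ is nonamenable, $G\times\mathbb{Z}$ is nonamenable as well and one can have a nonuniqueness phase, so this step can fail outright. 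For lower semicontinuity, exponential decay of subcritical connection probabilities via Menshikov or Aizenman--Barsky (or Duminil-Copin--Tassion) is proved for transitive or quasi-transitive graphs; it is not known for percolation on an arbitrary bounded-degree graph, let alone an inhomogeneous model on one. Your final paragraph correctly identifies the absence of translation invariance as the obstacle, but the remedy you propose---that each block meets only boundedly many columns---only gives uniform control on the \emph{edge law} inside a block; it does not supply the missing global structure needed to renormalize.

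The paper takes an entirely different, and in this generality much more robust, route. It proves a coupling proposition: for each $\varepsilon>0$ there is $\eta>0$ so that for $\delta<\eta$ one has $\mathbb{P}_{q+\delta,\,p-\varepsilon}((v,0)\leftrightarrow\infty)\le \mathbb{P}_{q-\delta,\,p+\varepsilon}((v,0)\leftrightarrow\infty)$. Since $p_c$ is monotone, a jump at $q_0$ would violate this inequality, so continuity follows immediately. The coupling is local: around each column $U^{(r)}$ one cuts $\mathbb{G}$ into finite blocks $\bigl(\mathbb{V}^{(r)}_n,\mathbb{E}^{(r)}_n\bigr)$ of height $2|U^{(r)}|+2$, and on each block one uses the Szab\'o--Valesin coupling, which shows that a decrease of $q$ by $\delta$ can be compensated, in the sense of preserving boundary-to-boundary connections inside the block, by an increase of $p$ by $\varepsilon$. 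The hypotheses $\sup_r|U^{(r)}|<\infty$ and $\operatorname{dist}_G(U^{(i)},U^{(j)})\ge 3$ enter exactly to make the blocks disjoint and to guarantee that only finitely many block \emph{isomorphism types} occur, so a single $\eta$ works for all of them. No renormalization, no sharpness of the phase transition, and no amenability are needed.
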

\begin{remark}
Just as we have based our non-oriented percolation model upon the
one of Szab\'o and Valesin, we can generalize the oriented model also
present in \cite{SV} in an analogous manner. By the same reasoning
we shall present in the sequel, the continuity of the critical parameter
for this new model also holds.
\end{remark}

\section{Proof of Theorem \ref{thm:continuity-pcrit}}

Theorem \ref{thm:continuity-pcrit} is a consequence of the following
proposition:
\begin{proposition}
\label{prop:worse-q-better-p}Fix $p,q\in(0,1)$ and $\lambda=\min(p,1-p)$. If $\sup_{r\in\mathbb{N}}\left|U^{(r)}\right|<\infty$ and $\dist G{U^{(i)}}{U^{(j)}}\geq3,\forall i\neq j$, for all $\varepsilon\in(0,\lambda)$, there exists $\eta=\eta(q,p,\varepsilon)>0$
such that if $\delta\in(0,\eta)$ then
\[
\mathbb{P}_{q+\delta,p-\varepsilon}\left((v,0)\leftrightarrow\infty\right)\leq\mathbb{P}_{q-\delta,p+\varepsilon}\left((v,0)\leftrightarrow\infty\right)
\]
for every $v\in V\setminus\left(\cup_{r\in\mathbb{N}}U^{(r)}\right)$.
\end{proposition}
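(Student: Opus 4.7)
The plan is to couple the two processes through a common base configuration and compare them via a local detour argument, adapting the strategy of \cite{SV} to infinitely many inhomogeneous regions. First I would put both measures on a single probability space: let $\omega_0\sim\mathbb{P}_{q-\delta,p-\varepsilon}$, and introduce independent Bernoulli sprinklings $\xi^+$ on $\mathbb{E}^{\text{in}}:=\bigcup_r\mathbb{E}^{\text{in},(r)}$ with parameter $\alpha^+=2\delta/(1-q+\delta)$, and $\xi^-$ on $\mathbb{E}\setminus\mathbb{E}^{\text{in}}$ with parameter $\alpha^-=2\varepsilon/(1-p+\varepsilon)$. Then $\omega_0\cup\xi^+\sim\mathbb{P}_{q+\delta,p-\varepsilon}$ and $\omega_0\cup\xi^-\sim\mathbb{P}_{q-\delta,p+\varepsilon}$, so that the proposition reduces, after conditioning on $\omega_0$, to comparing the two conditional percolation probabilities; since $\alpha^+\to 0$ while $\alpha^-$ stays of order $\varepsilon$, the bulk of the argument is geometric rather than probabilistic.

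The technical core is a local detour lemma. Write $K=\sup_r|U^{(r)}|$ and let $\Delta_G$ be the maximum degree of $G$. For every region $r$ and every edge $e\in\mathbb{E}^{\text{in},(r)}$ I would fix in advance a finite family $\mathcal{D}(e)$ of paths in $\mathbb{E}\setminus\mathbb{E}^{\text{in}}$ joining the endpoints of $e$, each of length bounded by some $L=L(K,\Delta_G)$. Such uniformly short detours exist because $U^{(r)}$ is finite inside an infinite graph, so its outer $G$-boundary is non-empty; the separation hypothesis $\dist{G}{U^{(i)}}{U^{(j)}}\geq 3$ guarantees that the paths in $\mathcal{D}(e)$ lie entirely outside every $\mathbb{E}^{\text{in},(s)}$, and moreover that detour families attached to different regions use disjoint edges. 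This disjointness ensures mutual independence of the events ``some path in $\mathcal{D}(e)$ is open in $\xi^-$'' across distinct regions, which is the decisive improvement over a union-bound argument and what makes the approach work for infinitely many regions.

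To finish, given an infinite self-avoiding open path $\gamma$ from $(v,0)$ in $\omega_0\cup\xi^+$, I would substitute each edge of $\gamma\cap(\xi^+\setminus\omega_0)$ by an open detour from $\xi^-$, producing an open path in $\omega_0\cup\xi^-$. Inter-region independence is then used to control the probability of a failed substitution, and the proposition follows for $\eta$ small enough. \emph{The main obstacle} arises when $U^{(r)}$ contains an interior vertex $u$ whose $G$-neighbours all lie in $U^{(r)}$: then every edge of $\mathbb{G}$ incident to $\{u\}\times\mathbb{Z}$ belongs to $\mathbb{E}^{\text{in},(r)}$, so no strict edge-by-edge homogeneous detour exists. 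I would circumvent this by replacing entire excursions of $\gamma$ into $U^{(r)}\times\mathbb{Z}$ rather than single edges, using that $v\notin\bigcup_r U^{(r)}$ forces each excursion to begin and end at a boundary vertex of $U^{(r)}$, and that $|U^{(r)}|\leq K$ keeps the replacement path of uniformly bounded length.
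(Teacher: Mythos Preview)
Your approach diverges from the paper's and contains a genuine gap. The paper does not use a sprinkling-plus-path-surgery scheme at all; instead it chops each infinite column $U^{(r)}\times\mathbb{Z}$ into finite blocks $\mathbb{V}_n^{(r)}$ of height $2|U^{(r)}|+2$, invokes the Szab\'o--Valesin finite-box coupling (Lemma~\ref{claim:coupling-inside-cylinders}) to obtain on each block an \emph{almost-sure} inclusion $C_n^{(r)}(A,\omega_n^{(r)})\subset C_n^{(r)}(A,{\omega'}_n^{(r)})$ of boundary-to-boundary connections, and then observes (Lemma~\ref{claim:uniform-bound}) that bounded degree together with $\sup_r|U^{(r)}|<\infty$ forces only finitely many block isomorphism types, so a single $\eta$ works for all $(r,n)$. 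The product of these block couplings with the trivial monotone coupling on $\mathbb{E}_{\mathcal{O}}$ yields the proposition directly, with no probabilistic loss at any stage.

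The specific error in your plan is the claim that ``$|U^{(r)}|\le K$ keeps the replacement path of uniformly bounded length''. An excursion of $\gamma$ into $U^{(r)}\times\mathbb{Z}$ can enter at height $m_1$ and exit at height $m_2$ with $|m_1-m_2|$ arbitrarily large (or never exit at all, climbing the column to infinity); any path in $\mathbb{E}\setminus\mathbb{E}^{\text{in}}$ joining its endpoints must itself have length at least $|m_1-m_2|$. Hence no uniform bound $L(K,\Delta_G)$ exists, the probability that a given detour is open in $\omega_0\cup\xi^-$ is not bounded away from zero, and the substitution step collapses. The paper's block decomposition in the $\mathbb{Z}$-direction is precisely what prevents this, and the non-trivial almost-sure coupling of Lemma~\ref{claim:coupling-inside-cylinders} (not a naive detour) handles each block. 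Even setting the length issue aside, your final sentence is not an argument: an infinite self-avoiding path may require infinitely many substitutions, each failing with positive probability, so almost surely some fail; ``inter-region independence'' does not by itself turn this into the desired inequality of percolation probabilities.
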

\begin{proof}[Proof of Theorem \ref{thm:continuity-pcrit}]
Since $q\mapsto p_{c}(q)$ is non-increasing, any discontinuity,
if exists, must be a jump. Suppose $p_{c}$ is discontinuous at some
point $q_{0}\in(0,1)$, let $a=\lim_{q\downarrow q_{0}}p_{c}(q)$
and $b=\lim_{q\uparrow q_{0}}p_{c}(q)$. Then, for any $p\in(a,b)$,
we can find an $\varepsilon>0$ such that for every $\delta>0$ we
have
\[
\mathbb{P}_{q_{0}-\delta,p+\varepsilon}\left((v,0)\leftrightarrow\infty\right)=0<\mathbb{P}_{q_{0}+\delta,p-\varepsilon}\left((v,0)\leftrightarrow\infty\right)
\]
for every $v\in V$, a contradiction according to Proposition \ref{prop:worse-q-better-p}.
\end{proof}
The proof of Proposition \ref{prop:worse-q-better-p} is based
on the construction of a coupling which allows us to understand how
a small change in the parameters of the model affects the percolation
behavior. This construction is done in several steps. First, we split
our edge set $\mathbb{E}$ in an appropriate disjoint family of subsets.
Second, we define coupling measures on each of these sets in such
a way that the increase of one parameter compensates an eventual decrease
of the other in the sense of preserving the connections between boundary
vertices of some ``well chosen sets'', which will play an important
role when we consider percolation on the graph $\mathbb{G}$ as a
whole. Third, we verify that we can set the same parameters for each
coupling provided that we can limit the size of the sets in which
the inhomogeneities are introduced. Finally, we merge these couplings
altogether by considering the product measure of each one. Most of
these ideas are the same as in \cite{LRV} and \cite{SV}. To
put it rigorously, we begin with some definitions.

For $r\in\mathbb{N}$, $n\in\mathbb{Z}$, let $L_{r}:=\left|U^{(r)}\right|$
and
\begin{align*}
\mathbb{V}_{n}^{(r)} & :=\left\{ (v,m)\in\mathbb{V}; \dist Gv{U^{(r)}}\leq1,(2L_{r}+2)n\leq m\leq(2L_{r}+2)(n+1)\right\} ;\\
\mathbb{E}_{n}^{(r)} & :=\left\{ e\in\mathbb{E}; \text{\ensuremath{e} has both endvertices in \ensuremath{\mathbb{V}_{n}^{(r)}}}\right\} \\
 & \qquad\setminus\left\{ e\in\mathbb{E};  e=\langle(u,(2L_{r}+2)(n+1)),(v,(2L_{r}+2)(n+1))\rangle,\langle u,v\rangle\in\mathbb{E}\right\} ;\\
\mathbb{E}^{(r)} & :=\cup_{n\in\mathbb{Z}}\mathbb{E}_{n}^{(r)}.
\end{align*}
Note that
\begin{itemize}
\item $G$ has bounded degree and $\left|U^{(r)}\right|<\infty$ implies
$\left(\mathbb{V}_{n}^{(r)},\mathbb{E}_{n}^{(r)}\right)$ is finite;
\item $\mathbb{E}_{n}^{(r)}\cap\mathbb{E}_{n'}^{(r)}=\emptyset, \forall n\neq n'$;
\item For any $n,n'\in\mathbb{Z},\ \mathbb{E}_{n}^{(r)}\cap\mathbb{E}_{n'}^{(r')}=\emptyset,\forall r\neq r'$. This is true since we are assuming $\dist G{U^{(r)}}{U^{(r')}}\geq3$,
which implies $\dist{\mathbb{G}}{\mathbb{V}_{n}^{(r)}}{\mathbb{V}_{n'}^{(r')}}\geq1$.
\end{itemize}
Next, recall the definition of $\mathbb{E}^{\text{in},(r)}$ in (\ref{eq:internal-cylinder-infinite})
and define
\begin{align*}
\mathbb{E}_{n}^{\partial,(r)} & :=\mathbb{E}_{n}^{(r)}\setminus\mathbb{E}^{\text{in},(r)}, & \mathbb{E}_{n}^{\text{in},(r)} & :=\mathbb{E}_{n}^{(r)}\cap\mathbb{E}^{\text{in},(r)}, & \mathbb{E}_{\mathcal{O}} & :=\mathbb{E}\setminus\left(\cup_{r\in\mathbb{N}}\mathbb{E}^{(r)}\right).
\end{align*}
One should also observe that $\mathbb{E}$ is a disjoint union of
the sets defined above:
\begin{align*}
\mathbb{E} & =\mathbb{E}_{\mathcal{O}}\cup\bigcup_{r\in\mathbb{N}}\mathbb{E}^{(r)}\\
 & =\mathbb{E}_{\mathcal{O}}\cup\bigcup_{r\in\mathbb{N}}\bigcup_{n\in\mathbb{Z}}\mathbb{E}_{n}^{(r)}\\
 & =\mathbb{E}_{\mathcal{O}}\cup\bigcup_{r\in\mathbb{N}}\bigcup_{n\in\mathbb{Z}}\left(\mathbb{E}_{n}^{\partial,(r)}\cup\mathbb{E}_{n}^{\text{in},(r)}\right).
\end{align*}

Thus, letting 
\begin{align*}
\Omega_{\mathcal{O}} & =\{0,1\}^{\mathbb{E}_{\mathcal{O}}}, & \Omega_{n}^{(r)} & =\{0,1\}^{\mathbb{E}_{n}^{(r)}}, & \Omega_{n}^{\partial,(r)} & =\{0,1\}^{\mathbb{E}_{n}^{\partial,(r)}}, & \Omega_{n}^{\text{in},(r)} & =\{0,1\}^{\mathbb{E}_{n}^{\text{in},(r)}},
\end{align*}
 we can write
\begin{align*}
\Omega & =\Omega_{\mathcal{O}}\times\prod_{r\in\mathbb{N}}\prod_{n\in\mathbb{Z}}\Omega_{n}^{(r)}\\
 & =\Omega_{\mathcal{O}}\times\prod_{r\in\mathbb{N}}\prod_{n\in\mathbb{Z}}\left(\Omega_{n}^{\partial,(r)}\times\Omega_{n}^{\text{in},(r)}\right).
\end{align*}

Denote $\partial\mathbb{V}_{n}^{(r)}$ to indicate the vertex boundary
of $\mathbb{V}_{n}^{(r)},$ that is,
\begin{align*}
\partial\mathbb{V}_{n}^{(r)} & :=\left\{ (v,m)\in\mathbb{V}_{n}^{(r)}; \dist Gv{U^{(r)}}=1\right\} \\
& \;\cup\left( U^{(r)}\times\{(2L_{r}+2)n\}\right) \cup\left(U^{(r)}\times\{(2L_{r}+2)(n+1)\}\right) .
\end{align*}

Finally, for $A\subset\partial\mathbb{V}_{n}^{(r)}$ and $\omega_{n}^{(r)}\in\Omega_{n}^{(r)}$,
define
\begin{align*}
C_{n}^{(r)}\left(A,\omega_{n}^{(r)}\right)  :=\left\{ (v,m)\in\partial\mathbb{V}_{n}^{(r)}; \exists (v_{0},n_{0})\in A,(v,m)\stackrel{\mathbb{V}_{n}^{(r)}}{\leftrightarrow}(v_{0},n_{0})\right\} .
\end{align*}

Given any $A\subset\mathbb{E}$, let $\mathbb{P}_{.,.}\restriction_{A}$ the measure $\mathbb{P}_{.,.}$ restricted to the sample space $\{0,1\}^A$. With these definitions in hand, we are ready to establish the facts
necessary for the proof of Proposition \ref{prop:worse-q-better-p}.
\begin{lemma}
\label{claim:coupling-world-outside}Let $p,q\in(0,1)$, $\lambda=\min(p,1-p)$.
For any $\varepsilon\in(0,\lambda)$ and $\delta\in(0,1)$ such that
$(q-\delta,q+\delta)\subset [0,1]$, there exists a coupling $\mu_{\mathcal{O}}=(\omega_{\mathcal{O}},\omega'_{\mathcal{O}})$
on $\Omega_{\mathcal{O}}^{2}$ such that
\begin{itemize}
\item $\omega_{\mathcal{O}}  \overset{(d)}{=}\mathbb{P}_{q+\delta,p-\varepsilon}\restriction_{\mathbb{E}_{\mathcal{O}}}$; 
\item $\omega'_{\mathcal{O}}  \overset{(d)}{=}\mathbb{P}_{q-\delta,p+\varepsilon}\restriction_{\mathbb{E}_{\mathcal{O}}}$;
\item $\omega_{\mathcal{O}}  \leq\omega'_{\mathcal{O}}\quad\text{a.s.}$.
\end{itemize}

\end{lemma}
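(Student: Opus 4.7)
The plan is to observe that this lemma, unlike the more delicate couplings on the $\mathbb{E}^{(r)}$ that will follow, involves no interaction between the $p$-edges and the $q$-edges: every edge in $\mathbb{E}_{\mathcal{O}}$ is a $p$-edge under both $\mathbb{P}_{q+\delta,p-\varepsilon}$ and $\mathbb{P}_{q-\delta,p+\varepsilon}$. So once this is verified, the lemma reduces to the textbook monotone coupling of two product Bernoulli measures.

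The first step is to check that $\mathbb{E}_{\mathcal{O}} \cap \bigcup_{r} \mathbb{E}^{\text{in},(r)} = \emptyset$, i.e.\ $\mathbb{E}^{\text{in},(r)} \subset \mathbb{E}^{(r)}$ for each $r$. This is a bookkeeping check from the definitions: any $e \in \mathbb{E}^{\text{in},(r)}$ has both endpoints of the form $(w,m)$ with $w \in U^{(r)}$, so in particular $\dist{G}{w}{U^{(r)}}=0\le 1$; picking the unique $n$ with $(2L_r+2)n \le m \le (2L_r+2)(n+1)$ (and, for vertical edges between two heights, choosing the level that contains both heights) places $e$ in $\mathbb{E}_n^{(r)} \subset \mathbb{E}^{(r)}$. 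Consequently the marginals $\mathbb{P}_{q\pm\delta,p\mp\varepsilon}\restriction_{\mathbb{E}_{\mathcal{O}}}$ are simply the product Bernoulli measures with parameters $p-\varepsilon$ and $p+\varepsilon$ respectively, with no dependence on $q$ or $\delta$.

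The second step is the coupling itself. Since $\varepsilon \in (0,\lambda)$, both $p-\varepsilon$ and $p+\varepsilon$ lie in $(0,1)$. On an auxiliary probability space, take i.i.d.\ uniform $[0,1]$ random variables $\{U_e\}_{e\in\mathbb{E}_{\mathcal{O}}}$ and set
\[
\omega_{\mathcal{O}}(e) := \ind\{U_e \le p-\varepsilon\}, \qquad \omega'_{\mathcal{O}}(e) := \ind\{U_e \le p+\varepsilon\}.
\]
The marginal laws of $\omega_{\mathcal{O}}$ and $\omega'_{\mathcal{O}}$ are the required product measures, and $\omega_{\mathcal{O}}(e) \le \omega'_{\mathcal{O}}(e)$ holds edgewise by construction, giving the almost sure domination.

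There is essentially no obstacle here: the lemma is the warm-up case that handles the edges for which the two parameters decouple. The genuine work of Proposition \ref{prop:worse-q-better-p} will be in constructing the analogous couplings on each block $\mathbb{E}_n^{(r)}$, where $q$-edges and $p$-edges coexist and one must trade an increase in $q$ against a decrease in $p$ while preserving the connections inside $\partial\mathbb{V}_n^{(r)}$; that is the step that will require the uniform bound on $|U^{(r)}|$ and the spacing condition $\dist{G}{U^{(i)}}{U^{(j)}}\ge 3$.
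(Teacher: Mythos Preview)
Your proof is correct and follows essentially the same approach as the paper: both recognize that edges in $\mathbb{E}_{\mathcal{O}}$ carry only the parameter $p$ and then invoke the standard monotone coupling of product Bernoulli measures. The paper realizes the coupling via $\omega_{\mathcal{O}}=Z_1$, $\omega'_{\mathcal{O}}=Z_1\lor Z_2$ with $Z_1,Z_2$ independent Bernoulli of parameters $p-\varepsilon$ and $\frac{2\varepsilon}{1-p+\varepsilon}$, while you use the equivalent uniform-threshold construction; your added verification that $\mathbb{E}^{\text{in},(r)}\subset\mathbb{E}^{(r)}$ makes explicit a point the paper leaves implicit.
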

\begin{proof}
This construction is standard. Let $Z=(Z_{1},Z_{2})\in\Omega_{\mathcal{O}}^{2}$
be a pair of random elements defined in some probability space, such
that the marginals $Z_{1}$ and $Z_{2}$ are independent on every
edge of $\mathbb{E}_{\mathcal{O}}$ and assign each edge to be open
with probabilities $p-\varepsilon$ and $\frac{2\varepsilon}{1-p+\varepsilon}$,
respectively. Taking $\omega_{\mathcal{O}}=Z_{1}$ and $\omega'_{\mathcal{O}}=Z_{1}\lor Z_{2}$,
define $\mu_{\mathcal{O}}$ to be the distribution of $(\omega_{\mathcal{O}},\omega'_{\mathcal{O}})$
and the claim readily follows.
\end{proof}

The next lemma is one of the fundamental facts established in \cite{SV},
so we refer the reader to the paper for a proof of the statement.
\begin{lemma}
\label{claim:coupling-inside-cylinders}Let $p,q\in(0,1)$, $\lambda=\min(p,1-p)$,
$r\in\mathbb{N}$. For any $\varepsilon\in(0,\lambda)$, there exists
an $\eta^{(r)}>0$ such that if $\delta\in\left(0,\eta^{(r)}\right)$,
there is a coupling $\mu_{n}^{(r)}=(\omega_{n}^{(r)},{\omega'}_{n}^{(r)})$ on $\Omega_{n}^{(r)}\times\Omega_{n}^{(r)}$
with the following properties:

\begin{itemize}
\item $\omega_{n}^{(r)}  \overset{(d)}{=}\mathbb{P}_{q+\delta,p-\varepsilon}\restriction_{\mathbb{E}_{n}^{(r)}}$; 
\item ${\omega'}_{n}^{(r)} \overset{(d)}{=}\mathbb{P}_{q-\delta,p+\varepsilon}\restriction_{\mathbb{E}_{n}^{(r)}}$;
\item $C_{n}^{(r)}\left(A,\omega_{n}^{(r)}\right)\subset C_{n}^{(r)}\left(A,{\omega'}_{n}^{(r)}\right)$
for every $A\in\partial\mathbb{V}_{n}^{(r)}$ almost surely.
\end{itemize}

Moreover, the value $\eta^{(r)}>0$ depends only on the choice of $q$, $p$,
$\varepsilon$ and the graph $\left(\mathbb{V}_{0}^{(r)},\mathbb{E}_{0}^{(r)}\right)$.
\end{lemma}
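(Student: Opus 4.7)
My plan is to reduce everything to a finite configuration space and then build the coupling $\mu_n^{(r)}$ by treating the boundary edges $\mathbb{E}_n^{\partial,(r)}$ and internal edges $\mathbb{E}_n^{\text{in},(r)}$ separately; crucially, $(\mathbb{V}_n^{(r)},\mathbb{E}_n^{(r)})$ is finite because $G$ has bounded degree and $L_r<\infty$, so I am coupling two product Bernoulli measures on a finite Boolean cube. The marginals in the lemma are automatic from the decomposition $\mathbb{E}_n^{(r)}=\mathbb{E}_n^{\partial,(r)}\sqcup\mathbb{E}_n^{\text{in},(r)}$, so the real content is arranging the joint law so that the boundary-cluster domination holds for every $A\subset\partial\mathbb{V}_n^{(r)}$.

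On $\mathbb{E}_n^{\partial,(r)}$ I would use the standard monotone coupling of Bernoulli$(p-\varepsilon)$ and Bernoulli$(p+\varepsilon)$, yielding $\omega_n^{(r)}\leq{\omega'}_n^{(r)}$ edgewise on the boundary. The difficulty is on $\mathbb{E}_n^{\text{in},(r)}$, where the edgewise stochastic order points the wrong way ($q+\delta>q-\delta$), so a naive monotone coupling lets $\omega_n^{(r)}$ create connections that ${\omega'}_n^{(r)}$ lacks. The resolution I would pursue is a \emph{rerouting argument} exploiting the geometry of the thickened column $\mathbb{V}_n^{(r)}$ around $U^{(r)}\times\mathbb{Z}$: every internal edge $e$ admits a detour path through boundary edges of $\mathbb{V}_n^{(r)}$, obtained by stepping off the column at one endpoint of $e$, walking along boundary edges, and stepping back at the other. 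Coupling the joint sample so that whenever an internal edge is open in $\omega_n^{(r)}$ but closed in ${\omega'}_n^{(r)}$ the extra boundary edges granted to ${\omega'}_n^{(r)}$ by the $p+\varepsilon$ marginal keep the corresponding detour fully open, one can replace each offending edge in any $\omega_n^{(r)}$-open path within $\mathbb{V}_n^{(r)}$ by its detour, producing an ${\omega'}_n^{(r)}$-open path with the same endpoints and hence $C_n^{(r)}(A,\omega_n^{(r)})\subset C_n^{(r)}(A,{\omega'}_n^{(r)})$ for every $A$.

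The role of $\eta^{(r)}$ is quantitative. The probability that a given set of $k$ internal edges needs rerouting is of order $\delta^k$, whereas the probability that the necessary detours are simultaneously present in the $p+\varepsilon$-boundary sample is bounded below by a strictly positive constant depending only on $\varepsilon$ and the finite geometry of $(\mathbb{V}_0^{(r)},\mathbb{E}_0^{(r)})$. Choosing $\eta^{(r)}$ small enough that the second quantity dominates the first uniformly over the (finitely many) disagreement patterns lets a Strassen-type reallocation of probability mass on the finite product space $\Omega_n^{(r)}\times\Omega_n^{(r)}$ succeed.

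I expect the main obstacle to be the bookkeeping of this reallocation: one must produce a single coupling measure on $\Omega_n^{(r)}\times\Omega_n^{(r)}$ with the correct marginals whose support lies entirely inside the relation ``boundary clusters $C_n^{(r)}(A,\cdot)$ are dominated for \emph{every} $A$ simultaneously''. Honestly verifying this amounts to checking feasibility of a flow on a finite poset of box-configurations, and the constant $\eta^{(r)}$ one extracts from feasibility depends precisely on $q,p,\varepsilon$ and $(\mathbb{V}_0^{(r)},\mathbb{E}_0^{(r)})$, matching the conclusion of the lemma and the uniformity later needed to deduce Proposition \ref{prop:worse-q-better-p} from the hypothesis $\sup_r L_r<\infty$.
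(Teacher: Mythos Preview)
The paper does not actually prove this lemma; it explicitly defers to \cite{SV} for the argument. So there is no in-paper proof to compare against, only the construction in \cite{SV}. Your high-level plan --- treat the boundary edges $\mathbb{E}_n^{\partial,(r)}$ and interior edges $\mathbb{E}_n^{\text{in},(r)}$ separately, couple the boundary monotonically, and pay for interior discrepancies with the extra $2\varepsilon$ available on boundary edges, finally invoking a Strassen-type feasibility argument on the finite space $\Omega_n^{(r)}$ --- is indeed the shape of the argument in \cite{SV}.

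There is, however, a genuine gap in your central geometric claim. You assert that ``every internal edge $e$ admits a detour path through boundary edges of $\mathbb{V}_n^{(r)}$, obtained by stepping off the column at one endpoint of $e$, walking along boundary edges, and stepping back at the other.'' This is false, and the failure is forced by the very definition of $\mathbb{E}_n^{(r)}$: all horizontal edges at the top level $(2L_r+2)(n+1)$ are removed. Consequently, for any $u\in U^{(r)}$ the boundary vertex $(u,(2L_r+2)(n+1))$ has exactly one edge in $\mathbb{E}_n^{(r)}$, namely the \emph{interior} vertical edge down to $(u,(2L_r+2)(n+1)-1)$; there is no boundary edge incident to it at all, so no boundary-only detour can replace that interior edge. (Already in the simplest case $G=\mathbb{Z}$, $U^{(r)}=\{0\}$, the top interior edge has no such detour.) Under any coupling the event ``this edge is open in $\omega_n^{(r)}$ but closed in ${\omega'}_n^{(r)}$'' has probability at least $2\delta>0$, and on it the vertex $(u,\text{top})$ is isolated in ${\omega'}_n^{(r)}$; no amount of boundary sprinkling can repair this. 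So the edge-by-edge rerouting picture cannot be the mechanism that produces the almost-sure domination.

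What rescues the lemma --- and what \cite{SV} actually does --- is closer to the part you relegate to ``bookkeeping''. The existence of $\mu_n^{(r)}$ is, via Strassen, equivalent to stochastic domination of $\mathbb{P}_{q+\delta,p-\varepsilon}\restriction_{\mathbb{E}_n^{(r)}}$ by $\mathbb{P}_{q-\delta,p+\varepsilon}\restriction_{\mathbb{E}_n^{(r)}}$ with respect to the preorder ``induces a finer partition of $\partial\mathbb{V}_n^{(r)}$''. Every upward-closed event in this preorder is an edge-increasing event, and the geometric content (which is \emph{not} per-edge detours) is that every nontrivial such event genuinely depends on at least one $p$-edge; hence at $\delta=0$ one has strict inequality $\mathbb{P}_{q,p-\varepsilon}[S]<\mathbb{P}_{q,p+\varepsilon}[S]$, and continuity on the finite space yields the required $\eta^{(r)}$. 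In the example above this is arranged not by bypassing the offending interior edge in ${\omega'}_n^{(r)}$, but by making ${\omega'}_n^{(r)}$ on that edge depend on the rest of $\omega_n^{(r)}$ so that the bad pattern is placed only where $(u,\text{top})$ was already isolated in $\omega_n^{(r)}$. That is the substantive step, not bookkeeping, and your sketch should foreground it rather than the detour heuristic.
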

The last ingredient used in the proof Proposition \ref{prop:worse-q-better-p}
is the following fact:
\begin{lemma}
\label{claim:uniform-bound}If $\sup_{r\in\mathbb{N}}\left|U^{(r)}\right|<\infty$
then for any $\epsilon>0$ fixed, the sequence $\left\{ \eta^{(r)}\right\} _{r\in\mathbb{N}}$
in Lemma \ref{claim:coupling-inside-cylinders} may be chosen bounded away from
$0$.
\end{lemma}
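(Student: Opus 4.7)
The plan is to exploit the hypothesis $L:=\sup_{r\in\mathbb{N}}|U^{(r)}|<\infty$ together with the bounded-degree assumption on $G$ in order to show that, as $r$ varies, only finitely many distinct (up to a natural notion of isomorphism) local graphs $(\mathbb{V}_0^{(r)},\mathbb{E}_0^{(r)})$ actually occur. Since Lemma \ref{claim:coupling-inside-cylinders} guarantees that $\eta^{(r)}$ may be chosen as a function of $q$, $p$, $\varepsilon$ and the graph $(\mathbb{V}_0^{(r)},\mathbb{E}_0^{(r)})$ alone, the collection of admissible values $\{\eta^{(r)}\}_{r\in\mathbb{N}}$ will reduce to a finite set of positive numbers, whose minimum yields the desired uniform lower bound.

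First I would uniformly bound $|\mathbb{V}_0^{(r)}|$. Let $\Delta$ denote an upper bound on the vertex degrees of $G$. The set of vertices of $G$ at graph distance at most $1$ from $U^{(r)}$ has cardinality at most $|U^{(r)}|(1+\Delta)\leq L(1+\Delta)$. Since only the layers $0\leq m\leq 2L_r+2$ contribute to $\mathbb{V}_0^{(r)}$, and $L_r\leq L$, we obtain $|\mathbb{V}_0^{(r)}|\leq L(1+\Delta)(2L+3)=:N$, a bound independent of $r$.

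Next I would observe that there are only finitely many finite graphs on at most $N$ vertices endowed with a distinguished partition of their edges into ``internal'' ($\mathbb{E}_0^{\text{in},(r)}$) and ``boundary'' ($\mathbb{E}_0^{\partial,(r)}$), considered up to isomorphisms that preserve this partition. Since the coupling in Lemma \ref{claim:coupling-inside-cylinders} is built intrinsically on $(\mathbb{V}_0^{(r)},\mathbb{E}_0^{(r)})$ and treats an edge only according to whether it receives probability close to $p$ or close to $q$, the admissible value $\eta^{(r)}$ depends only on the isomorphism class of this edge-partitioned graph. Picking representatives $r_1,\ldots,r_k$ of the finitely many classes that arise, I would set $\eta:=\min_{1\leq i\leq k}\eta^{(r_i)}>0$ and use this common value for all $r$.

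The only mildly delicate point, and the one I expect to require the most care, is to verify that the dependence of $\eta^{(r)}$ on the local graph really does respect the appropriate notion of isomorphism. This should be essentially automatic from the construction in \cite{SV}, where $\eta^{(r)}$ arises from a continuity argument applied to finitely many connection probabilities inside the finite box $(\mathbb{V}_0^{(r)},\mathbb{E}_0^{(r)})$, quantities manifestly invariant under relabelings preserving the edge partition. I would simply record this invariance explicitly and then conclude by taking the minimum over the finite list of classes.
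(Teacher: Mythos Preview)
Your proposal is correct and follows essentially the same route as the paper: bound the size of the local blocks $(\mathbb{V}_0^{(r)},\mathbb{E}_0^{(r)})$ uniformly in $r$ using $\sup_r|U^{(r)}|<\infty$ and the bounded-degree assumption, observe that only finitely many such graphs can occur, and take the minimum of the corresponding $\eta^{(r)}$. If anything, your version is slightly more careful than the paper's, since you explicitly track the partition of edges into those receiving parameter $p$ versus $q$ and the isomorphism invariance of $\eta^{(r)}$, points the paper leaves implicit.
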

\begin{proof}
From Lemma \ref{claim:coupling-inside-cylinders}, it follows that,
for every $r\in\mathbb{N}$, the value $\eta^{(r)}>0$ depends on
the choice of $q$, $p$, $\varepsilon$ and the graph $\left(\mathbb{V}_{0}^{(r)},\mathbb{E}_{0}^{(r)}\right)$.
Note that while the values of $q$, $p$ and $\varepsilon$ are the
same for different values of $r\in\mathbb{N}$, the graphs $\left(\mathbb{V}_{0}^{(r)},\mathbb{E}_{0}^{(r)}\right)$
may differ. However, there are only a finite number of possible graphs
for $\left(\mathbb{V}_{0}^{(r)},\mathbb{E}_{0}^{(r)}\right)$ to assume.
As a matter of fact, the graph $\left(\mathbb{V}_{0}^{(r)},\mathbb{E}_{0}^{(r)}\right)$
is obtained from the vertex set $U^{(r)}\cup\partial U^{(r)}$ and
from the edges with both endpoints in $U^{(r)}\cup\partial U^{(r)}$.
Since $\sup_{r\in\mathbb{N}}\left|U^{(r)}\right|<\infty$ and
$G$ is of limited degree, we have that $M:=\sup_{r\in\mathbb{N}}\left|U^{(r)}\cup\partial U^{(r)}\right|<\infty$.
Since there are only a finite number of graphs of limited degree with
at most $M$ vertices, the claim regarding $\left(\mathbb{V}_{0}^{(r)},\mathbb{E}_{0}^{(r)}\right)$
follows, that is, $\eta:=\inf_{r\in\mathbb{N}}\eta^{(r)}>0$.
\end{proof}

\begin{proof}[Proof of Proposition \ref{prop:worse-q-better-p}]
From Lemmas \ref{claim:coupling-inside-cylinders} and \ref{claim:uniform-bound}
we have the following result:
Let $p,q\in(0,1)$, $\lambda=\min(p,1-p)$. For any $\varepsilon\in(0,\lambda)$,
there exists an $\eta>0$ such that if $\delta\in(0,\eta)$, there
is a family of couplings $\left\{ \mu_{n}^{(r)}\right\} _{\substack{r\in\mathbb{N}\\
n\in\mathbb{Z}
}
}$, with each $\mu_{n}^{(r)}=(\omega_{n}^{(r)},{\omega'}_{n}^{(r)})$ defined on $\Omega_{n}^{(r)}\times\Omega_{n}^{(r)}$
and having the following property:

\begin{itemize}
\item $\omega_{n}^{(r)}  \overset{(d)}{=}\mathbb{P}_{q+\delta,p-\varepsilon}\restriction_{\mathbb{E}_{n}^{(r)}}$; 
\item ${\omega'}_{n}^{(r)}  \overset{(d)}{=}\mathbb{P}_{q-\delta,p+\varepsilon}\restriction_{\mathbb{E}_{n}^{(r)}}$;
\item $C_{n}^{(r)}\left(A,\omega_{n}^{(r)}\right)\subset C_{n}^{(r)}\left(A,{\omega'}_{n}^{(r)}\right)$
for every $A\in\partial\mathbb{V}_{n}^{(r)}$ almost surely.
\end{itemize}

Let $\mu_{\mathcal{O}}$ be the coupling of Lemma \ref{claim:coupling-world-outside}.
Defining the coupling measure $\mu$ on $\Omega^{2}$ by
\[
\mu=\mu_{\mathcal{O}}\times\prod_{r\in\mathbb{N}}\prod_{n\in\mathbb{Z}}\mu_{n}^{(r)},
\]
it is clear that if $(\omega,\omega')\sim\mu$, then $\omega\overset{(d)}{=}\mathbb{P}_{q+\delta,p-\varepsilon}$,
$\omega'\overset{(d)}{=}\mathbb{P}_{q-\delta,p+\varepsilon}$, and
almost surely $(v,0)\leftrightarrow\infty$ in $\omega$ implies $(v,0)\leftrightarrow\infty$ in ${\omega'}$,
for every $v\in V\setminus\left(\cup_{r\in\mathbb{N}}U^{(r)}\right)$.
\end{proof}

\section*{Acknowledgements} BNBL is partially suported by CNPq. Both authors would like to thank CAPES for the financial support.


\begin{thebibliography}{999}

\bibitem{CLS} R.G. do Couto, B.N.B. de Lima and R. Sanchis, \emph{Anisotropic Percolation on slabs}, Markov Processes and Related Fields {\bf 20}, 145-154, (2014).
\bibitem{LRV} B.N.B. de Lima, L.T. Rolla and D. Valesin, \emph{Monotonicity and phase transition diagram for multirange percolation on oriented trees}, To appear in Random Structures \& Algorithms, (2019).
\bibitem{IRM} G.K. Iliev, E.J. Janse van Rensburg and N. Madras, \emph{Phase diagram of inhomogeneous percolation of a defect plane}, Journal of Statistical Physics {\bf 158}, 255-299, (2015).

\bibitem{Ke} Kesten H., \emph{Percolation Theory for Mathematicians}, Birkh\"auser, Boston, (1982).
\bibitem{SV} R. Szab\'o and D. Valesin, \emph{Inhomogeneous percolation on ladder graphs}, ArXiv:1805.03419, (2018). 
\bibitem{Z} Y. Zhang, \emph{A note on inhomogeneous percolation}, Annals of Probability {\bf 22}, 803-819, (1994).


\end{thebibliography}
\end{document}